\def\p{\partial}
\def\R{\mathbb{R}}
\def\vv<#1>{\langle#1\rangle}
\def\XXint#1#2{\setbox0=\hbox{$#1{#2}{\int}$}{#2}\kern-.5\wd0 }
\def\XXint#1#2#3{{\setbox0=\hbox{$#1{#2#3}{\int}$}
     \vcenter{\hbox{$#2#3$}}\kern-.5\wd0}}
\def\vv<#1>{{\left\langle#1\right\rangle}}
\newtheorem{thm}{Theorem}[section]
\newtheorem{lem}{Lemma}[section]
\newtheorem{cor}{Corollary}[section]
\theoremstyle{definition}
\theoremstyle{remark}
\newtheorem{rem}{Remark}[section]
\numberwithin{equation}{section}
\begin{document}
\title{Trace and inverse trace of Steklov eigenvalues II}

\author{Yongjie Shi}
\address{Department of Mathematics, Shantou University, Shantou, Guangdong, 515063, China}
\email{yjshi@stu.edu.cn}
\author{Chengjie Yu$^1$}
\address{Department of Mathematics, Shantou University, Shantou, Guangdong, 515063, China}
\email{cjyu@stu.edu.cn}
\thanks{$^1$Research partially supported by a supporting project from the Department of Education of Guangdong Province with contract no. Yq2013073, the Yangfan project from Guangdong Province and NSFC 11571215.}
\renewcommand{\subjclassname}{%
  \textup{2010} Mathematics Subject Classification}
\subjclass[2010]{Primary 35P15; Secondary 58J32}
\date{}
\keywords{Differential form,Steklov eigenvalue,Hodge-Laplace operator}
\begin{abstract}
This is a continuation of our previous work [J. Differential Equations, 261 (2016), no. 3, 2026--2040.] on the trace and inverse trace of Steklov eigenvalues. More new inequalities for the trace and inverse trace of Steklov eigenvalues are obtained.
\end{abstract}
\maketitle\markboth{Shi \& Yu}{Trace and inverse trace II}
\section{Introduction}
For a compact oriented Riemannian manifold $(M^n,g)$ with nonempty boundary, the Dirichlet-to-Neumann map $L^{(p)}:A^p(\p M)\to A^p(\p M)$ for differential $p$-forms maps $\omega\in A^p(\p M)$ to $i_\nu d\hat\omega$ where $\hat\omega\in A^p(M)$ is the tangential harmonic extension of $\omega$ and $\nu$ is the outward unit normal vector on $\p M$. This new notion of Dirichlet-to-Neumann map was recently introduced by Raulot and Savo \cite{RS2}. When $p=0$, $L^{(0)}$ coincides the classical Dirichlet-to-Neumann map or Steklov operator essentially introduced by Steklov \cite{St}. The same as the Steklov operator (see \cite{Ta} for example), $L^{(p)}$ was proved to be a nonnegative self-adjoint first order elliptic pseudo-differential operator by Raulot-Savo \cite{RS2}. Hence, the eigenvalues of $L^{(p)}$ is discrete, and can be listed in ascending order counting multiplicities as follows:
\begin{equation}
0\leq \sigma_1^{(p)}\leq \sigma_2^{(p)}\leq\cdots\leq\sigma_k^{(p)}\leq\cdots.
\end{equation}
They are called Steklov eigenvalues for differential $p$-forms on $M$.

It is clear that $\sigma_1^{(0)}=0$ with constant function as the eigenfunction and $\sigma_2^{(0)}>0$. However, this is not true for $p\geq 1$. Indeed, it is not hard to see that (see \cite{RS2})
\begin{equation}
\ker L^{(p)}=\mathcal H_N^p(M).
\end{equation}
Here
\begin{equation}
\mathcal H_N^p(M)=\{\omega\in A^p(M)\ |\ d\omega=\delta\omega=0\ \mbox{and } i_\nu\omega=0\}.
\end{equation}
By Hodge theory on compact manifolds with nonempty boundary (see \cite{S}),
\begin{equation}
\dim\mathcal H_N^p(M)=b_p
\end{equation}
where $b_p$ is the $p$-th Betti number of $M$. Hence, the multiplicity of the eigenvalue $0$ for $L^{(p)}$ is the same as the $p$-th Betti number of $M$.

There has been many works on Steklov eigenvalues (see for example \cite{Br,CEG,Di,E,FS1,FS2,GP1,GP2,GP3,HPS,IM,K,RS1,RS2,RS3,W,WX}) and Dirichlet-to-Neumman map since their importance in mathematical physics (see \cite{Ku}) and applied mathematics (see \cite{Ul}). It is really hard to give a complete list for works on estimates of Steklov eigenvalues. One can consult the survey \cite{GP} for recent progresses.

In \cite{HPS}, Hersch-Payne-Schiffer proved the following interesting inequality for bounded simply connected planar domain $\Omega$ by using harmonic conjugate:
\begin{equation}\label{eqn-HPS}
\sigma_{p+1}^{(0)}\sigma_{q+1}^{(0)}L(\p\Omega)^2\leq \left\{\begin{array}{ll}(p+q)^2\pi^2& p+q\ \mbox{is even}\\
(p+q-1)^2\pi^2& p+q\ \mbox{is odd.}\end{array}\right.
\end{equation}
Here $L(\p\Omega)$ means the length of $\p \Omega$. This result was generalized by Girouard and Polterovich \cite{GP1} to general surfaces:
\begin{equation}\label{eqn-GP}
\sigma_{p+1}^{(0)}\sigma_{q+1}^{(0)}L(\p M)^2\leq \left\{\begin{array}{ll}(\gamma+k)^2(p+q)^2\pi^2& p+q\ \mbox{is even}\\
(\gamma+k)^2(p+q-1)^2\pi^2& p+q\ \mbox{is odd.}\end{array}\right.
\end{equation}
Here $M$ is a compact surface with genus $\gamma$ and $k$ boundary components. Note that, by setting $p=q$ in \eqref{eqn-HPS} and \eqref{eqn-GP}, one can obtain estimates for Steklov eigenvalues that generalized the classical result of Weinstock \cite{W} and a result of Fraser and Schoen \cite{FS1} respectively.

In \cite{YY2}, Liangwei Yang and the second author generalized \eqref{eqn-HPS} to higher dimensional case by applying the trick of harmonic conjugate introduced by Hersch-Payne-Schiffer \cite{HPS} to the new setting of Steklov eigenvalues for differential forms introduced by Raulot-Savo \cite{RS2}. The result is as follows:
\begin{equation}\label{eqn-YY}
\sigma_{1+p}^{(0)}\sigma_{b_{n-2}+q}^{(n-2)}\leq\lambda_{b_{n-1}+p+q}(\p M).
\end{equation}
Here $M$ is of dimension $n$ and $\lambda_{k}(\p M)$ means the $k$-th eigenvalues of $\p M$ for the Laplacian operator. This result produces new estimates even in the case of surfaces. Indeed, in \cite{Ka}, Karpukhin proved the following inequality by using \eqref{eqn-YY}:
\begin{equation}\label{eqn-K}
\sigma_{p+1}^{(0)}\sigma_{q+1}^{(0)}L(\p M)^2\leq \left\{\begin{array}{ll}(p+q+2\gamma+2k-2)^2\pi^2& p+q\ \mbox{is even}\\
(p+q+2\gamma+2k-1)^2\pi^2& p+q\ \mbox{is odd.}\end{array}\right.
\end{equation}
Here $M$ is a compact oriented surface with genus $\gamma$ and $k$ boundary components. It is clear that  inequality \eqref{eqn-K} is sharper than \eqref{eqn-GP}.

In \cite{HPS}, Hersch, Payne and Schiffer also obtained some sharp estimates on the inverse trace of Steklov eigenvalues on bounded simply connected planar domain $\Omega$ with smooth boundary. Their result is:
\begin{equation}\label{eqn-HPS-trace}
\sum_{i=1}^{2n}\frac{1}{\sigma_{1+i}^{(0)}}\geq \frac{L(\p\Omega)}{\pi}\sum_{i=1}^{n}\frac{1}{i}
\end{equation}
for any positive integer $n$. It is not hard to see that the inequality is sharp on the unit disk. This estimate was generalized to general surfaces by the authors in \cite{SY}.

The equality \eqref{eqn-HPS-trace} can be reformulated in majorization relations. Let $x=(x_1,x_2,\cdots,x_n)\in\R^n$. Rearrange its components in descending order as $x_{[1]}\geq x_{[2]}\geq\cdots\geq x_{[n]}$. Let $y\in \R^n$ be another vector. We say that $x$ is weakly majorized by $y$, denoted by $x\prec_wy$, if
\begin{equation}
\sum_{i=1}^mx_{[i]}\leq \sum_{i=1}^my_{[i]}.
\end{equation}
for any $m=1,2,\cdots,n$. Furthermore, we say that $x$ is majorized by $y$, denoted as $x\prec y$, if $x\prec_wy$ and $\sum_{i=1}^nx_i=\sum_{i=1}^ny_i$. Now, using the majorization relations, \eqref{eqn-HPS-trace} can be reformulated as
\begin{equation}\label{eqn-HPS-majorized}
\frac{L(\p\Omega)}{\pi}\left(1,\frac12,\cdots,\frac1n\right)\prec_w\left(\frac{1}{\sigma_2^{(0)}}+\frac{1}{\sigma_3^{(0)}},\frac{1}{\sigma_4^{(0)}}+\frac{1}{\sigma_5^{(0)}},\cdots,\frac{1}{\sigma_{2n}^{(0)}}+\frac{1}{\sigma_{2n+1}^{(0)}}\right).
\end{equation}

The following two basic majorization principles are useful for producing new inequalities:
\begin{enumerate}
\item if $x\prec_wy$ and $f$ is an increasing convex function, then $$(f(x_1),f(x_2),\cdots,f(x_n))\prec_w(f(y_1),f(y_2),\cdots,f(y_n));$$
\item if $x\prec y$ and $f$ is a convex function, then $$(f(x_1),f(x_2),\cdots,f(x_n))\prec_w(f(y_1),f(y_2),\cdots,f(y_n)).$$
\end{enumerate}
By applying the basic majorization principle with $f(t)=t^2$ to \eqref{eqn-HPS-majorized}, one has
\begin{equation}\label{eqn-inverse-trace-2}
\left(\frac{1}{\sigma_{2}^{(0)}}\right)^2+\left(\frac{1}{\sigma_{3}^{(0)}}\right)^2+\cdots+\left(\frac{1}{\sigma_{{2n+1}}^{(0)}}\right)^2\geq \frac{L(\p\Omega)^2}{2\pi^2}\left(1+\frac{1}{2^2}+\frac{1}{3^2}+\cdots+\frac1{n^2}\right).
\end{equation}
As mentioned in \cite{SY}, a weaker version of this inequality can be found in \cite{Di}. In fact, the general estimates in \cite{SY} can also be obtained in this way.

In \cite{HPS}, the authors also posed the following interesting question: is the following inequality true for bounded simply connected planar domain $\Omega$ with smooth boundary:
\begin{equation}
\frac{1}{\sigma_{2}^{(0)}\sigma_3^{(0)}}+\frac{1}{\sigma_{3}^{(0)}\sigma_4^{(0)}}+\cdots+\frac{1}{\sigma_{2n}^{(0)}\sigma_{2n+1}^{(0)}}\geq \frac{L(\p\Omega)^2}{4\pi^2}\left(1+\frac{1}{2^2}+\frac{1}{3^2}+\cdots+\frac1{n^2}\right)?
\end{equation}
According to the knowledge of the authors, no answer have been found. \eqref{eqn-inverse-trace-2} can be viewed also a weaker version of the inequality. The question of Hersch-Payne-Schiffer is the motivation of the study of this paper.

Furthermore, note that the inequalities \eqref{eqn-YY} and results in \cite{SY} have a similar feature. The left hand sides of the inequalities depend on the geometry of $M$ by definition while the right hand sides of the inequalities depend only on the intrinsic geometry of $\p M$. This may in some sense relate to the interesting problem of determining the geometry of $M$ from the Steklov spectrum or the Steklov operator. In this paper, we obtain new inequalities in a similar feature on the trace or inverse trace of Steklov eigenvalues by combining the tricks in \cite{YY} and \cite{YY2}.

The first main result of this paper is the following inequality mixing up trace and inverse trace of Steklov eigenvalues with weights.
\begin{thm}\label{thm-1}
Let $(M^n,g)$ be a compact oriented Riemannian manifold with nonempty boundary.  Then, for any positive integer $r,s$ and $m$,
\begin{equation}\label{eqn-thm-1}
\begin{split}
\left(\sum_{i=1}^m\left(a_i\sigma_{b_{n-2}+s+i-1}^{(n-2)}\right)^\frac{1}{p}\right)\left(\sum_{i=1}^{m}\left(\frac{c_i}{\sigma_{r+i}^{(0)}}\right)^{\frac{q}{p}}\right)^{-\frac{1}{q}}\leq \left(\sum_{i=1}^m\left(\frac{a_i\lambda_{b_{n-1}+r+s+i-1}}{c_i}\right)^\frac{q^*}{p}\right)^{\frac{1}{q^*}}
\end{split}
\end{equation}
where $q\geq p\geq 1$, $q>1$,$\frac{1}{q^*}+\frac{1}{q}=1$,
\begin{equation}
a_1\geq a_2\geq\cdots\geq a_m\geq0\ \mbox{and}
\end{equation}
\begin{equation}
c_1\geq c_2\geq\cdots\geq c_m>0.
\end{equation}
Here $\lambda_k$ means the $k$-th eigenvalue of the Laplacian operator on $\p M$.
\end{thm}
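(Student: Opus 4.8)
The plan is to reduce \eqref{eqn-thm-1} to a single termwise product estimate, which is exactly the content of \eqref{eqn-YY}, and then to recover the stated product-of-sums form by one application of H\"older's inequality. To keep the bookkeeping transparent I would abbreviate $\mu_i=\sigma_{b_{n-2}+s+i-1}^{(n-2)}$, $\tau_i=\sigma_{r+i}^{(0)}$ and $\ell_i=\lambda_{b_{n-1}+r+s+i-1}$; all three sequences are nondecreasing in $i$, and $a_i,c_i$ are positive and nonincreasing.

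The geometric input enters only through a product bound for $\tau_i\mu_i$. Feeding \eqref{eqn-YY} the two free indices so that the $0$-form eigenvalue is taken at position $r+i$ and the $(n-2)$-form eigenvalue at position $b_{n-2}+s+i-1$ bounds $\tau_i\mu_i$ by a single Laplace eigenvalue of $\p M$, and the task is to identify that eigenvalue with $\ell_i$. This index matching is the step I expect to be the main obstacle: since \eqref{eqn-YY} is additive in the two offsets, a bare substitution lands on $\lambda_{b_{n-1}+r+s+2i-2}$ rather than on $\lambda_{b_{n-1}+r+s+i-1}$, so one cannot simply quote \eqref{eqn-YY} termwise. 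It is precisely here that the monotonicity hypotheses $a_1\ge\cdots\ge a_m\ge0$ and $c_1\ge\cdots\ge c_m>0$, together with the monotonicity of the three eigenvalue sequences, have to be brought in through a rearrangement of the summation so as to realign the Laplace index and deliver the estimate $\tau_i\mu_i\le\ell_i$ in the form the H\"older step can consume.

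Granting this, the remainder is purely algebraic. For each $i$ I would factor the summand of the first sum and insert the product bound,
\begin{equation}
(a_i\mu_i)^{\frac1p}=\left(\frac{c_i}{\tau_i}\right)^{\frac1p}\left(\frac{a_i\tau_i\mu_i}{c_i}\right)^{\frac1p}\le\left(\frac{c_i}{\tau_i}\right)^{\frac1p}\left(\frac{a_i\ell_i}{c_i}\right)^{\frac1p},
\end{equation}
the inequality being monotonicity of $t\mapsto t^{1/p}$ applied to $\tau_i\mu_i\le\ell_i$. Summing in $i$ and applying H\"older's inequality with the conjugate pair $q,q^*$ (valid because $\frac1q+\frac1{q^*}=1$ with $q,q^*>1$) gives
\begin{equation}
\sum_{i=1}^m(a_i\mu_i)^{\frac1p}\le\left(\sum_{i=1}^m\left(\frac{c_i}{\tau_i}\right)^{\frac qp}\right)^{\frac1q}\left(\sum_{i=1}^m\left(\frac{a_i\ell_i}{c_i}\right)^{\frac{q^*}p}\right)^{\frac1{q^*}},
\end{equation}
and dividing by the first factor on the right is exactly \eqref{eqn-thm-1}. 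The condition $q\ge p\ge1$ is used only to guarantee $q/p\ge1$, so that the middle sum is a genuine power-sum of the nonincreasing sequence $c_i/\tau_i$ and its $-1/q$ power is the quantity appearing on the left of \eqref{eqn-thm-1}.

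Finally, I would note that this structure isolates all of the analysis into the first step: after the product bound is in hand, no geometry is used, and any improvement of \eqref{eqn-YY} would pass verbatim through the H\"older argument to sharpen \eqref{eqn-thm-1}. This is consistent with the stated method of combining the eigenvalue bound of \cite{YY2} with the algebraic rearrangement trick of \cite{YY}.
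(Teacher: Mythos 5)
Your reduction collapses the whole theorem onto the single termwise estimate $\sigma_{r+i}^{(0)}\sigma_{b_{n-2}+s+i-1}^{(n-2)}\le\lambda_{b_{n-1}+r+s+i-1}$, and you correctly observe that \eqref{eqn-YY} only yields the weaker index $\lambda_{b_{n-1}+r+s+2i-2}$; but the realignment you hope to achieve by rearrangement is not a bookkeeping issue --- the termwise bound you need is simply false. Take $M$ a simply connected planar domain obtained by joining two unit disks by a thin neck (the family used by Girouard and Polterovich to show sharpness of the Hersch--Payne--Schiffer bound). Then $b_0=1$, $b_1=0$, $L(\p M)\to 4\pi$, $\sigma_3^{(0)}\to 1$, while $\lambda_3(\p M)=(2\pi/L(\p M))^2\to 1/4$. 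With $r=s=1$ and $i=2$ (and $n=2$, so $(n-2)$-forms are functions) your bound reads $\bigl(\sigma_3^{(0)}\bigr)^2\le\lambda_3$, i.e.\ $1\le 1/4$ in the limit. No monotonicity of the weights $a_i,c_i$ can repair this, because the claimed termwise inequality does not involve them.

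The correct argument cannot treat \eqref{eqn-YY} as a black box; the geometry does not disappear after one product bound. In the paper one builds, from harmonic extensions of the first $b_{n-1}+r+s+m-1$ Laplace eigenfunctions of $\p M$ and their harmonic conjugates, two positive definite $m\times m$ matrices $A$ and $B$ with $\sigma_{r+i}^{(0)}\le\lambda_i(A)$, $\sigma_{b_{n-2}+s+i-1}^{(n-2)}\le\lambda_i(B)$ and, crucially, the \emph{diagonal-entry} bound $B(i,i)\le A^{-1}(i,i)\,\lambda_{b_{n-1}+r+s+i-1}$ (Lemma \ref{lem-comp-e}). The passage between eigenvalues of $A$ and $B$ and their diagonal entries is then made by Schur majorization together with a rearrangement argument (Lemma \ref{lem-matrix}), which is exactly where the hypotheses $a_1\ge\cdots\ge a_m\ge 0$, $c_1\ge\cdots\ge c_m>0$ and $1/p\le 1\le q/p$ enter: only aggregated sums are comparable, never the individual products $\lambda_i(A)\lambda_i(B)$. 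Your concluding H\"older step with the conjugate exponents $q,q^*$ does coincide with the paper's final step, but everything preceding it has to be replaced by this matrix construction.
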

When $m=1$, Theorem \ref{thm-1} give us \eqref{eqn-YY}. The weights $a_1,a_2,\cdots a_m$ and $c_1,c_2,\cdots,c_m$ can be used to make the inequality sharper. For example, when $M$ is a simply connected surface, we can choose suitable weights to make \eqref{eqn-thm-1} sharp on the unit disk.
\begin{cor}
Let $M^2$ be a compact oriented simply connected surface. Then, for any positive integer $n$,
\begin{equation}\label{eqn-cor-1}
\sum_{i=1}^{n}\frac{\sigma_{2i}^{(0)}+\sigma_{2i+1}^{(0)}}{i^3}\leq \frac{4\sqrt 2\pi^2}{L(\p M)^2} \left(\sum_{i=1}^{2n}\left(\frac{1}{\sigma_{1+i}^{(0)}}\right)^2\right)^\frac{1}{2}\left(\sum_{i=1}^{n}\frac{1}{i^2}\right)^\frac{1}{2}.
\end{equation}
The equality holds when $M$ is a disk. Letting $n\to\infty$, one have
\begin{equation}
\sum_{i=1}^{\infty}\frac{\sigma_{2i}^{(0)}+\sigma_{2i+1}^{(0)}}{i^3}\leq\frac{4\sqrt 3\pi^3}{3L(\p M)^2}\left(\sum_{i=1}^{\infty}\left(\frac{1}{\sigma_{1+i}^{(0)}}\right)^2\right)^\frac{1}{2}.
\end{equation}
\end{cor}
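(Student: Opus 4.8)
The plan is to derive the corollary as a direct specialization of Theorem \ref{thm-1}, choosing the free parameters so as to both collapse the left-hand side into the pairwise sum $\sigma_{2i}^{(0)}+\sigma_{2i+1}^{(0)}$ and to force the eigenvalue factor on the right to telescope into $\sum 1/i^2$. Since $M^2$ is a compact oriented simply connected surface, it is a topological disk, so $b_0=1$ and $b_1=0$; with $n=2$ we have $b_{n-2}=b_0=1$, $b_{n-1}=b_1=0$, and $\sigma^{(n-2)}=\sigma^{(0)}$. I would take $p=1$ and $q=q^*=2$ (admissible, since $q\ge p\ge1$ and $q>1$), together with $r=s=1$, $m=2n$, $c_i\equiv 1$, and the weights $a_{2j-1}=a_{2j}=1/j^3$ for $j=1,\dots,n$. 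One checks immediately that $a_1\ge a_2\ge\cdots\ge a_{2n}\ge0$ and $c_1\ge\cdots\ge c_m>0$, so the hypotheses of Theorem \ref{thm-1} are met.

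With these choices the indices in Theorem \ref{thm-1} read $b_{n-2}+s+i-1=i+1$, $r+i=i+1$, and $b_{n-1}+r+s+i-1=i+1$; rearranging \eqref{eqn-thm-1} with $p=1$, $q=q^*=2$, $c_i\equiv1$ gives
\begin{equation}
\sum_{i=1}^{2n}a_i\,\sigma_{i+1}^{(0)}\leq\left(\sum_{i=1}^{2n}\frac{1}{(\sigma_{i+1}^{(0)})^2}\right)^{\frac12}\left(\sum_{i=1}^{2n}a_i^2\lambda_{i+1}^2\right)^{\frac12}.
\end{equation}
Because $a_{2j-1}=a_{2j}=1/j^3$, the left-hand sum is exactly $\sum_{j=1}^n(\sigma_{2j}^{(0)}+\sigma_{2j+1}^{(0)})/j^3$, i.e. the left side of \eqref{eqn-cor-1}. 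Next I would insert the Laplace spectrum of the boundary. Here $\p M$ is a circle of length $L=L(\p M)$, whose Laplacian eigenvalues are $\lambda_{2k}=\lambda_{2k+1}=4\pi^2k^2/L^2$ for $k\ge1$ (with $\lambda_1=0$). Thus $\lambda_{i+1}$ takes the common value $4\pi^2k^2/L^2$ for the two indices $i=2k-1,2k$, precisely where the pairing $a_{2j-1}=a_{2j}$ matches the twofold degeneracy: for each such pair, $a_i^2\lambda_{i+1}^2=(1/k^6)(16\pi^4k^4/L^4)=16\pi^4/(L^4k^2)$. Summing over both members of every pair yields $\sum_{i=1}^{2n}a_i^2\lambda_{i+1}^2=(32\pi^4/L^4)\sum_{k=1}^n1/k^2$, so the eigenvalue factor equals $(4\sqrt2\,\pi^2/L^2)(\sum_{k=1}^n1/k^2)^{1/2}$, and substituting produces \eqref{eqn-cor-1}. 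The limiting inequality then follows at once from $\sum_{k=1}^\infty 1/k^2=\pi^2/6$, since $(4\sqrt2\,\pi^2/L^2)(\pi^2/6)^{1/2}=4\sqrt3\,\pi^3/(3L^2)$.

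The computation involves no genuine analytic difficulty; the step I regard as the crux is choosing the weights so that two alignments occur simultaneously: the power $q/p=2$ must convert the denominator factor into the true inverse-trace $\sum1/(\sigma_{i+1}^{(0)})^2$, while the pairing $a_{2j-1}=a_{2j}$ must be synchronized with the degeneracy $\lambda_{2k}=\lambda_{2k+1}$ of the circle so that the weighted eigenvalue sum collapses to a multiple of $\sum1/k^2$. Finally, the sharpness claim is verified on the unit disk, where $\sigma_{2k}^{(0)}=\sigma_{2k+1}^{(0)}=k$ and $L=2\pi$: both sides of \eqref{eqn-cor-1} then reduce to $2\sum_{k=1}^n 1/k^2$, so equality indeed holds.
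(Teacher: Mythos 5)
Your proposal is correct and follows essentially the same route as the paper: specialize Theorem \ref{thm-1} with $p=1$, $q=2$, $r=s=1$, $m=2n$, $c_i\equiv 1$, paired weights $a_{2j-1}=a_{2j}$, and the circle spectrum $\lambda_{2k}=\lambda_{2k+1}=4\pi^2k^2/L^2$. Your weights $1/j^3$ differ from the paper's $(2j\pi/L)^{-3}$ only by a common constant factor, which is immaterial since both sides of \eqref{eqn-thm-1} are homogeneous of the same degree in the $a_i$; your verification of equality on the unit disk is also correct.
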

\begin{proof}
Note that, in this case, $b_0=1$ and $b_1=0$. Let $p=1$, $q=2$, $r=s=1$, $m=2n$
\begin{equation}
a_{2i-1}=a_{2i}=\left(\frac{2i\pi}{L(\p M)}\right)^{-3},
\end{equation}
and $c_{2i-1}=c_{2i}=1$ for $i=1,2,\cdots,n$ in \eqref{eqn-thm-1}. Then, the conclusion follows by noting that
\begin{equation}
\lambda_{2i}(\p M)=\lambda_{2i+1}(\p M)=\left(\frac{2i\pi}{L(\p M)}\right)^2
\end{equation}
for $i=1,2,\cdots.$
\end{proof}

One can produce many inequalities of a similar form with \eqref{eqn-cor-1} which is sharp on the unit disk by choosing suitable weights.

The second main result of this paper is an inequality mixing up different types of inverse traces for Steklov eigenvalues as follows.

\begin{thm}\label{thm-2}
Let $(M^n,g)$ be a compact Riemannian manifold with nonempty boundary.  Then, for any positive integer $r,s,m$ and $k=1,2,\cdots,m$,
\begin{equation}\label{eqn-thm-2}
\begin{split}
&\sum_{1\leq i_1<i_2<\cdots<i_k\leq m}\left(\frac{1}{\sigma_{b_{n-2}+s+i_1-1}^{(n-2)}\sigma_{b_{n-2}+s+i_2-1}^{(n-2)}\cdots\sigma_{b_{n-2}+s+i_k-1}^{(n-2)}}\right)^p+\mu C_{m-1}^{k-1}\sum_{i=1}^{m}\left(\frac{1}{\sigma_{r+i}^{(0)}}\right)^q\\
\geq&\frac{kp+q}{pq}p^\frac{q}{kp+q}q^\frac{kp}{kp+q}\mu^\frac{kp}{kp+q}\times\\
&\sum_{1\leq i_1<i_2<\cdots<i_k\leq m}\left(\frac{1}{\lambda_{b_{n-1}+r+s+i_1-1}\lambda_{b_{n-1}+r+s+i_2-1}\cdots\lambda_{b_{n-1}+r+s+i_k-1}}\right)^\frac{pq}{kp+q}\\
\end{split}
\end{equation}
where $p>0$, $q\geq 1$ and $\mu>0$.
\end{thm}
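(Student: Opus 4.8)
The plan is to reduce \eqref{eqn-thm-2} to one pointwise inequality attached to each $k$-element index set, to manufacture the constant by a two-term weighted Young inequality combined with the arithmetic--geometric mean inequality, and to close with the basic estimate \eqref{eqn-YY}. First I would put both sides over the same index sets by a counting identity: since each $i\in\{1,\dots,m\}$ lies in exactly $C_{m-1}^{k-1}$ of the $k$-subsets of $\{1,\dots,m\}$,
\[
\mu C_{m-1}^{k-1}\sum_{i=1}^{m}\left(\frac{1}{\sigma_{r+i}^{(0)}}\right)^{q}=\mu\sum_{1\le i_1<\dots<i_k\le m}\ \sum_{\ell=1}^{k}\left(\frac{1}{\sigma_{r+i_\ell}^{(0)}}\right)^{q}.
\]
Thus it suffices to prove, for each fixed subset $I=\{i_1<\dots<i_k\}$, the inequality
\[
\left(\prod_{\ell=1}^{k}\frac{1}{\sigma_{b_{n-2}+s+i_\ell-1}^{(n-2)}}\right)^{p}+\mu\sum_{\ell=1}^{k}\left(\frac{1}{\sigma_{r+i_\ell}^{(0)}}\right)^{q}\ \ge\ \frac{kp+q}{pq}\,p^{\frac{q}{kp+q}}q^{\frac{kp}{kp+q}}\mu^{\frac{kp}{kp+q}}\left(\prod_{\ell=1}^{k}\frac{1}{\lambda_{b_{n-1}+r+s+i_\ell-1}}\right)^{\frac{pq}{kp+q}},
\]
and then sum over $I$.

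The analytic core is next. Writing $A$ for the first term on the left and $B=\sum_{\ell}(\sigma_{r+i_\ell}^{(0)})^{-q}$, I would apply the weighted inequality $X+Y\ge(X/\theta_1)^{\theta_1}(Y/\theta_2)^{\theta_2}$ with $\theta_1=\tfrac{q}{kp+q}$, $\theta_2=\tfrac{kp}{kp+q}$ to $X=A$, $Y=\mu B$, and then bound $B$ from below by arithmetic--geometric mean, $B\ge k\prod_{\ell}(\sigma_{r+i_\ell}^{(0)})^{-q/k}$. The two powers of $k$ cancel, and a direct computation (which I would check carefully) collapses all the constants to exactly $\frac{kp+q}{pq}p^{\frac{q}{kp+q}}q^{\frac{kp}{kp+q}}\mu^{\frac{kp}{kp+q}}$, yielding
\[
A+\mu B\ \ge\ \frac{kp+q}{pq}\,p^{\frac{q}{kp+q}}q^{\frac{kp}{kp+q}}\mu^{\frac{kp}{kp+q}}\prod_{\ell=1}^{k}\left(\frac{1}{\sigma_{r+i_\ell}^{(0)}\,\sigma_{b_{n-2}+s+i_\ell-1}^{(n-2)}}\right)^{\frac{pq}{kp+q}}.
\]
This is the part I am confident about: the precise cancellation is the reason the clean factor $\frac{kp+q}{pq}p^{\frac{q}{kp+q}}q^{\frac{kp}{kp+q}}$ survives with exponent $\frac{pq}{kp+q}$.

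Because $\frac{pq}{kp+q}>0$, the per-subset inequality now follows at once from the matched-index bound $\sigma_{r+i_\ell}^{(0)}\,\sigma_{b_{n-2}+s+i_\ell-1}^{(n-2)}\le\lambda_{b_{n-1}+r+s+i_\ell-1}$ for each $\ell$, and the hard part will be exactly this. The obstacle is purely one of index bookkeeping: feeding the two Steklov indices $r+i_\ell$ and $b_{n-2}+s+i_\ell-1$ straight into \eqref{eqn-YY} forces its parameters to be $r+i_\ell-1$ and $s+i_\ell-1$, and the Laplace index it returns is $b_{n-1}+r+s+2i_\ell-2$, which overshoots the target $b_{n-1}+r+s+i_\ell-1$; hence \eqref{eqn-YY} cannot be invoked as a black box here. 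The sharp matched pairing — which is equality on the disk and is what makes \eqref{eqn-thm-2} tight — must instead be produced directly, by re-running the tangential harmonic extension/harmonic-conjugate construction behind \eqref{eqn-YY} with test forms drawn from the $i_\ell$-th eigenspaces so that the $\sigma^{(0)}$ and $\sigma^{(n-2)}$ eigenvalues at position $i_\ell$ are tested against the single Laplace eigenvalue at position $i_\ell$. Establishing this pairing, rather than the constant-producing Young and AM--GM algebra, is where I expect the genuine work to be.
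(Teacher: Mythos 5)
Your reduction to per-subset inequalities, the counting identity for $C_{m-1}^{k-1}$, and the two-term Young plus AM--GM computation of the constant are all correct (I checked that your constant collapses to $\frac{kp+q}{pq}p^{\frac{q}{kp+q}}q^{\frac{kp}{kp+q}}\mu^{\frac{kp}{kp+q}}$ as claimed; the paper gets the same constant in one stroke from a $(k+1)$-term Young inequality, Lemma \ref{lem-young}). But the step you defer to the end --- the matched-index bound $\sigma_{r+i}^{(0)}\,\sigma_{b_{n-2}+s+i-1}^{(n-2)}\le\lambda_{b_{n-1}+r+s+i-1}$ for each individual $i$ --- is a genuine gap, and it is not merely ``index bookkeeping.'' It is a strictly stronger statement than the theorem, it is not what the paper proves, and the construction you propose for it does not go through. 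Concretely: to test $\sigma_{r+i}^{(0)}$ and $\sigma_{b_{n-2}+s+i-1}^{(n-2)}$ with a single harmonic function $u$ and its harmonic conjugate $\omega$, you need $u$ orthogonal to the first $r+i-2$ positive Steklov eigenfunctions, $\omega$ orthogonal to the first $s+i-2$ positive form-eigenfunctions, and $*du\perp\mathcal H_N^{n-1}(M)$; that is $b_{n-1}+r+s+2i-4$ homogeneous linear conditions, while restricting $u$ to $\mathrm{span}\{\hat\phi_2,\dots,\hat\phi_{b_{n-1}+r+s+i-1}\}$ leaves only $b_{n-1}+r+s+i-2$ free coefficients. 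For $i\ge 2$ the constraints meet or exceed the unknowns, so no nonzero test function need exist. This is exactly the obstruction that forces the overshoot to $\lambda_{b_{n-1}+r+s+2i-2}$ when you feed \eqref{eqn-YY} the shifted parameters, and it is why the theorem's right-hand side cannot be reached index by index.

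The paper circumvents this by never proving a per-index pairing. Lemma \ref{lem-comp-e} builds a single flag of test functions $u_1,\dots,u_m$ (with $u_i$ drawn from the first $b_{n-1}+r+s+i-1$ Laplace eigenfunctions) and produces two $m\times m$ positive definite matrices $A$, $B$ satisfying $\sigma_{r+i}^{(0)}\le\lambda_i(A)$, $\sigma_{b_{n-2}+s+i-1}^{(n-2)}\le\lambda_i(B)$, and crucially a relation only between \emph{diagonal entries}, $B(i,i)\le A^{-1}(i,i)\,\lambda_{b_{n-1}+r+s+i-1}$. The passage from eigenvalues to diagonal entries is then done for the \emph{whole symmetric sums} at once --- Schur majorization for $\sum_i(A^{-1}(i,i))^q$ and the $k$-th compound matrix plus Hadamard's inequality for $\sum_{I}\prod_{j}B(i_j,i_j)^{-p}$ (Lemma \ref{lem-matrix}(2),(3)) --- and this conversion is valid only in aggregate, not term by term. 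If you want to salvage your per-subset decomposition, you would have to carry it out at the level of the diagonal entries $A^{-1}(i,i)$, $B(i,i)$ after these majorization steps (which is in effect what the paper's final display does), not at the level of the Steklov eigenvalues themselves.
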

When $p=q$ and $k=\mu=1$ in \eqref{eqn-thm-2}, we have
\begin{equation}\label{eqn-power-q}
\sum_{i=1}^{m}\left(\frac{1}{\sigma_{r+i}^{(0)}}\right)^q+\sum_{i=1}^{m}\left(\frac{1}{\sigma_{b_{n-2}+r+i}^{(n-2)}}\right)^q\geq 2\sum_{i=1}^{m}\left(\frac{1}{\lambda_{b_{n-1}+r+s+i-1}}\right)^\frac{q}{2}.
\end{equation}
This is a special case of inequality (1.11) in \cite{SY}. The weight $\mu$ can be used to make the inequality sharper. For example, when $M$ is a simply connected surface, we have the following inequality.
\begin{cor}
Let $M^2$ be compact oriented simply connected surface. Then
\begin{equation}\label{eqn-cor-2}
\frac{1}{\sigma_{2}^{(0)}\sigma_{3}^{(0)}\cdots\sigma_{2n}^{(0)}\sigma_{2n+1}^{(0)}}+\frac{1}{2n}\sum_{i=1}^{2n}\left(\frac{1}{\sigma_{1+i}^{(0)}}\right)^{2n}\geq \frac{L(\p M)^{2n}}{2^{2n-1}\pi^{2n} (n!)^2}.
\end{equation}
\end{cor}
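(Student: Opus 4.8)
The plan is to obtain \eqref{eqn-cor-2} as a direct specialization of Theorem~\ref{thm-2}, evaluating each side explicitly by means of the spectrum of a circle. Fix a positive integer $n$. Since $M^2$ is a compact oriented simply connected surface it is a topological disk of dimension $2$; hence $b_0=1$ and $b_1=0$, so that in \eqref{eqn-thm-2} the degree-$(n-2)$ forms are $0$-forms, the degree-$(n-1)$ forms are $1$-forms, and $b_{n-2}=1$, $b_{n-1}=0$, $\sigma^{(n-2)}=\sigma^{(0)}$. The boundary $\p M$ is a single circle of length $L(\p M)$, so its Laplace spectrum, listed with multiplicity, satisfies $\lambda_{2i}=\lambda_{2i+1}=\left(\frac{2i\pi}{L(\p M)}\right)^2$ for every $i\geq 1$. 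In \eqref{eqn-thm-2} I would take
\[
p=1,\qquad q=2n,\qquad r=s=1,\qquad m=k=2n,\qquad \mu=\frac{1}{2n}.
\]

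The key structural point is that the choice $k=m$ forces each combinatorial sum over $1\leq i_1<\cdots<i_k\leq m$ to degenerate to the single index choice $i_j=j$. Thus the first term on the left of \eqref{eqn-thm-2} collapses to a single product; plugging in $b_{n-2}=1$, $s=1$, $m=2n$, the indices run from $2$ to $2n+1$, giving $\frac{1}{\sigma_2^{(0)}\sigma_3^{(0)}\cdots\sigma_{2n+1}^{(0)}}$, which is exactly the first term of \eqref{eqn-cor-2}. In the second term on the left one has $C_{m-1}^{k-1}=C_{2n-1}^{2n-1}=1$, and with $r=1$, $q=2n$, $\mu=\frac{1}{2n}$ it becomes $\frac{1}{2n}\sum_{i=1}^{2n}\left(\frac{1}{\sigma_{1+i}^{(0)}}\right)^{2n}$, matching the second term of \eqref{eqn-cor-2}.

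It then remains to evaluate the right-hand side. With $kp=q=2n$ and $kp+q=4n$, the scalar prefactor $\frac{kp+q}{pq}\,p^{\frac{q}{kp+q}}q^{\frac{kp}{kp+q}}\mu^{\frac{kp}{kp+q}}$ simplifies: $\frac{4n}{2n}=2$, $p^{1/2}=1$, and $q^{1/2}\mu^{1/2}=(2n)^{1/2}(2n)^{-1/2}=1$, leaving the constant $2$. The combinatorial sum on the right again collapses to a single product, now carrying the exponent $\frac{pq}{kp+q}=\frac12$; with $b_{n-1}=0$, $r=s=1$, the $\lambda$-indices run from $2$ to $2n+1$, so this factor is $\left(\lambda_2\lambda_3\cdots\lambda_{2n+1}\right)^{-1/2}$. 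Grouping the eigenvalues into the consecutive pairs $(\lambda_{2i},\lambda_{2i+1})$ and using the circle spectrum gives $\lambda_2\lambda_3\cdots\lambda_{2n+1}=\prod_{i=1}^{n}\left(\frac{2i\pi}{L(\p M)}\right)^{4}=\left(\frac{2\pi}{L(\p M)}\right)^{4n}(n!)^4$, whence the right-hand side equals $2\left(\frac{L(\p M)}{2\pi}\right)^{2n}\frac{1}{(n!)^2}=\frac{L(\p M)^{2n}}{2^{2n-1}\pi^{2n}(n!)^2}$, as claimed.

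This argument presents no genuine obstacle: the essential content is entirely supplied by the mixed-trace inequality of Theorem~\ref{thm-2}, and the corollary is simply its sharp specialization on the disk, obtained by setting $k=m$ so that both product-type terms reduce to single summands. The only points demanding care are the bookkeeping of the two index shifts (checking that the $\sigma^{(0)}$ and $\lambda$ indices both run from $2$ to $2n+1$) and the verification that the unwieldy scalar prefactor collapses to exactly $2$; both are routine once the parameters above are fixed.
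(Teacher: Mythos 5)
Your proposal is correct and coincides with the paper's own proof: the corollary is obtained by substituting $p=1$, $q=2n$, $r=s=1$, $m=k=2n$, $\mu=\frac{1}{2n}$ into Theorem~\ref{thm-2} and using $\lambda_{2i}=\lambda_{2i+1}=\left(\frac{2i\pi}{L(\p M)}\right)^2$ for the circle. Your explicit verification of the index bookkeeping and the collapse of the prefactor to $2$ is accurate and simply fills in details the paper leaves to the reader.
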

\begin{proof}
Let $m=k=2n$, $r=s=1$ , $p=1$, $\mu=\frac{1}{2n}$ and $q=2n$ in \eqref{eqn-thm-2}. Then, the conclusion follows by noting that
\begin{equation}
\lambda_{2i}=\lambda_{2i+1}=\left(\frac{2i\pi}{L(\p M)}\right)^2
\end{equation}
for $i=1,2,\cdots.$
\end{proof}
Note that \eqref{eqn-cor-2} is sharp when $n=1$ and $M$ is the unit disk and is not sharp when $n\geq 2$ on the unit disk.

The strategy to prove Theorem \ref{thm-1} and Theorem \ref{thm-2} in this paper is similar with that in \cite{SY}. The main difference is that we do not apply Courant-Fischer's min-max principle directly to obtain eigenvalue comparison (see Lemma \ref{lem-comp}). This is also a generalization of the key lemma in \cite{YY2}.

The outline of the remaining parts of this paper is as follows. In Section 2, we recall some preliminaries including harmonic conjugate, eigenvalue comparison and matrix inequalities that will be used in Section 3. In Section 3, we prove Theorem \ref{thm-1} and Theorem \ref{thm-2}.
 \section{Preliminaries}
 We first prove an eigenvalue comparison in the same spirit with Courant-Fischer's min-max principle. The result generalizes the key lemma in \cite{YY2}. The proof is similar with that in \cite{YY2}.
\begin{lem}\label{lem-comp}
Let $(M^n,g)$ be a compact oriented Riemannian manifold with nonempty boundary and $$\epsilon_1,\epsilon_2,\cdots,\epsilon_k,\cdots$$
 be a complete orthonormal system of positive Steklov eigenvalues for $p$-forms according to eigenvalues listed in ascending order.  Let $V$ be a finite dimensional subspace of
$$\left\{\omega\in A^p(M)\bigg|\begin{array}{l} \Delta\omega=0,\ \ i_\nu\omega=0,\ \omega\perp_{L^2(\p M)} \mathcal H^{p}_N(M),\\
\mbox{and}\ \omega\perp_{L^2{(\p M)}}\epsilon_1,\epsilon_2,\cdots,\epsilon_{s-1}\end{array}\right\}.$$
Here $p=0,1,2,\cdots,n-1$,  $s$ is a positive integer, $\nu$ is the unit outward normal vector on $\p M$,
\begin{equation}
\mathcal H_N^p(M)=\{\omega\in A^p(M)\ |\ d\omega=\delta\omega=0\ \mbox{and } i_\nu\omega=0\},
\end{equation}
and $\Delta$ is Hodge-Laplacian operator. Suppose that $\dim V=m$.
Then
\begin{equation}
\sigma_{b_p+s+k-1}^{(p)}\leq \lambda_k(A)
\end{equation}
for $k=1,2,\cdots,m$. Here
$$\lambda_1(A)\leq \lambda_2(A)\leq \cdots\leq \lambda_m(A)$$
is the eigenvalues of the linear transform $A:V\to V$ defined by
\begin{equation}
\int_M\left(\vv<d(A\alpha),d\beta>+\vv<\delta (A\alpha),\delta\beta>\right)dV_M=\int_{\p M}\vv<i_\nu d\alpha,i_\nu d\beta>dV_{\p M}
\end{equation}
for any $\alpha,\beta\in V$, and $b_p$ is the $p$-th Betti number of $M$.
\end{lem}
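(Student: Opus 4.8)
The plan is to push the entire comparison down to the boundary, where the Dirichlet-to-Neumann operator $L^{(p)}$ is diagonalized by the $\epsilon_k$, and then to read off the inequality from a Rayleigh-quotient estimate for $A$ together with a dimension count. Write $j:\p M\to M$ for the inclusion and $j^*$ for the pullback of a form to $\p M$; since every $\alpha\in V$ satisfies $i_\nu\alpha=0$, its boundary value is exactly $j^*\alpha$, and because such an $\alpha$ is its own tangential harmonic extension we have $L^{(p)}(j^*\alpha)=i_\nu d\alpha$. Abbreviate $\mu_\ell:=\sigma_{b_p+\ell}^{(p)}$, so that $L^{(p)}\epsilon_\ell=\mu_\ell\epsilon_\ell$ with $0<\mu_1\leq\mu_2\leq\cdots$, and set
\[
\mathcal Q(\alpha,\beta):=\int_M\left(\langle d\alpha,d\beta\rangle+\langle\delta\alpha,\delta\beta\rangle\right)dV_M,\qquad
\mathcal B(\alpha,\beta):=\int_{\p M}\langle i_\nu d\alpha,i_\nu d\beta\rangle\,dV_{\p M},
\]
so that $A$ is the operator with $\mathcal Q(A\alpha,\beta)=\mathcal B(\alpha,\beta)$.

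First I would record the two integration-by-parts identities on $V$. Applying Green's formula for $d$ and $\delta$ to $\alpha,\beta\in V$ and using $\Delta\beta=0$ together with $i_\nu\alpha=0$ kills the unwanted boundary terms and gives $\mathcal Q(\alpha,\beta)=\langle j^*\alpha,L^{(p)}j^*\beta\rangle_{L^2(\p M)}$, while $\mathcal B(\alpha,\beta)=\langle L^{(p)}j^*\alpha,L^{(p)}j^*\beta\rangle_{L^2(\p M)}$ directly. I must then check that $\mathcal Q$ is genuinely positive definite on $V$: if $\mathcal Q(\alpha,\alpha)=0$ then $d\alpha=\delta\alpha=0$, so $\alpha\in\mathcal H_N^p(M)$; since $\alpha\in V\perp_{L^2(\p M)}\mathcal H_N^p(M)$ this forces $\alpha|_{\p M}=0$, and a harmonic field with vanishing Cauchy data vanishes by \emph{unique continuation}, so $\alpha=0$. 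In particular $j^*$ is injective on $V$, and $A$ is a well-defined, self-adjoint, positive operator for the inner product $\mathcal Q$.

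Next I would diagonalize everything on the boundary. The two orthogonality conditions defining $V$ say precisely that each $\omega:=j^*\alpha$ lies in the span of $\{\epsilon_\ell:\ell\geq s\}$; writing $\omega=\sum_{\ell\geq s}c_\ell\epsilon_\ell$ yields
\[
\mathcal Q(\alpha,\alpha)=\sum_{\ell\geq s}c_\ell^2\mu_\ell,\qquad
\mathcal B(\alpha,\alpha)=\|L^{(p)}\omega\|_{L^2(\p M)}^2=\sum_{\ell\geq s}c_\ell^2\mu_\ell^2,
\]
so the Rayleigh quotient $\mathcal B(\alpha,\alpha)/\mathcal Q(\alpha,\alpha)$ of $A$ is a weighted mean of the $\mu_\ell$ with weights $c_\ell^2\mu_\ell\geq0$. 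Now let $f_1,\dots,f_m$ be a $\mathcal Q$-orthonormal eigenbasis of $A$ with $Af_i=\lambda_i(A)f_i$, and put $U=\mathrm{span}\{f_1,\dots,f_k\}$; on $U$ the Rayleigh quotient is at most $\lambda_k(A)$. On the other hand, the linear map sending $\gamma\in U$ to the $L^2(\p M)$-components of $j^*\gamma$ along $\epsilon_s,\dots,\epsilon_{s+k-2}$ maps the $k$-dimensional space $U$ into a $(k-1)$-dimensional space, hence has a nonzero kernel: there is $0\neq\gamma\in U$ with $j^*\gamma\in\mathrm{span}\{\epsilon_\ell:\ell\geq s+k-1\}$, and $j^*\gamma\neq0$ by injectivity. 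For this $\gamma$ the weighted-mean formula gives $\mathcal B(\gamma,\gamma)/\mathcal Q(\gamma,\gamma)\geq\mu_{s+k-1}$. Combining the two bounds gives $\mu_{s+k-1}\leq\lambda_k(A)$, that is, $\sigma_{b_p+s+k-1}^{(p)}\leq\lambda_k(A)$.

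The only genuinely delicate point is the first step: getting the boundary terms in Green's formula exactly right so that both $\mathcal Q$ and $\mathcal B$ collapse to the clean $L^2(\p M)$-pairings above, and justifying positive-definiteness through unique continuation for harmonic fields. Once these identities are in place, the comparison is a short Rayleigh-quotient-plus-dimension-count argument, carried out in the same spirit as Courant--Fischer but applied to $A$ rather than to the Steklov problem directly; the case $m=k=1$ recovers the key lemma of \cite{YY2}.
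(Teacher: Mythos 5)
Your proof is correct and follows essentially the same route as the paper's: a two-sided Rayleigh-quotient estimate on a nonzero element of the span of the first $k$ eigenvectors of $A$ whose boundary trace lies in the closed span of $\{\epsilon_\ell:\ell\geq s+k-1\}$, produced by a dimension count, with the two quadratic forms diagonalized via integration by parts and the spectral decomposition of $L^{(p)}$. The only difference is presentational: you make explicit the boundary identities $\mathcal Q(\alpha,\beta)=\langle j^*\alpha,L^{(p)}j^*\beta\rangle_{L^2(\partial M)}$ and verify the positive-definiteness of $\mathcal Q$ on $V$ by unique continuation, points the paper uses implicitly.
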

\begin{proof}
Let $\alpha_1,\alpha_2,\cdots,\alpha_m\in V$ be eigenforms of $A$ for $\lambda_1(A),\lambda_2(A),\cdots,\lambda_m(A)$ respectively. By linear algebra, we can also assume that
\begin{equation}\label{eqn-dij}
\int_M\left(\vv<d\alpha_i,d\alpha_j>+\vv<\delta\alpha_i,\delta\alpha_j>\right)dV_M=\delta_{ij}.
\end{equation}
Then
\begin{equation}\label{eqn-aij}
\begin{split}
\int_{\p M}\vv<i_\nu d\alpha_i,i_\nu d\alpha_j>dV_{\p M}=&\lambda_i(A)\int_M\left(\vv<d\alpha_i,d\alpha_j>+\vv<\delta\alpha_i,\delta\alpha_j>\right)dV_M\\
=&\lambda_i(A)\delta_{ij}.
\end{split}
\end{equation}
Let $E_k=\mbox{span}\{\alpha_1,\alpha_2,\cdots,\alpha_k\}$. Then,
$$E_k\cap \overline{\mbox{span}\{\epsilon_{s+k-1},\epsilon_{s+k},\cdots\}}\neq0$$
by dimension reasons.

Let $\omega\in E_k\cap \overline{\mbox{span}\{\epsilon_{s+k-1},\epsilon_{s+k},\cdots\}}$ be nonzero. Suppose
$$\omega=\sum_{i=s+k-1}^\infty c_i\epsilon_i.$$
Then
\begin{equation}
\begin{split}
\frac{\int_{\p M}\vv<i_\nu d\omega,i_\nu d\omega>dV_{\p M}}{\int_M\vv<d\omega,d\omega>+\vv<\delta
\omega,\delta\omega>dV_M}=&\frac{\int_{\p M}\vv<i_\nu d\omega,i_\nu d\omega>dV_{\p M}}{\int_{\p M}\vv<i_\nu d\omega,\omega>dV_M}\\
=&\frac{\sum_{i={s+k-1}}^\infty{\sigma_{b_p+i}^{(p)}}^2c_i^2}{\sum_{i=s+k-1}^\infty\sigma_{b_p+i}^{(p)}c_i^2}\\
\geq&\sigma_{b_p+s+k-1}^{(p)}.
\end{split}
\end{equation}
On the other hand, suppose  $\omega=\sum_{i=1}^kc_i\alpha_i$, by \eqref{eqn-dij} and \eqref{eqn-aij},
\begin{equation}
\frac{\int_{\p M}\vv<i_\nu d\omega,i_\nu d\omega>dV_{\p M}}{\int_M\vv<d\omega,d\omega>+\vv<\delta
\omega,\delta\omega>dV_M}=\frac{\sum_{i=1}^k\lambda_i(A)c_i^2}{\sum_{i=1}^kc_i^2}\leq \lambda_k(A).
\end{equation}
Combing the above two inequalities, we obtain the conclusion.
\end{proof}
Secondly, recall the following result about harmonic conjugate of harmonic functions for higher dimensional manifolds in \cite{YY2,SY}.
\begin{lem}\label{lem-harm-conj}
Let $(M^n,g)$ be a compact oriented Riemannian manifold with nonempty boundary and $u$ be a harmonic function on $M$.  Suppose that
\begin{equation}
*du\perp_{L^2(M)}\mathcal H_N^{(n-1)}(M).
\end{equation}
Then, there is a unique $\omega\in A^{n-2}(M)$ such that
\begin{enumerate}
\item $d\omega=*du;$
\item $\delta\omega=0$;
\item $i_\nu\omega=0$ and
\item $\omega\perp_{L^2(\p M)}\mathcal H_N^{n-2}(M)$.
\end{enumerate}
Here
\begin{equation}
\mathcal H_N^{p}=\{\gamma\in A^p(M)\ |\ d\gamma=\delta\gamma=0\ \mbox{and}\ i_\nu\gamma=0\}.
\end{equation}
$\omega$ is called the harmonic conjugate of $u$.
\end{lem}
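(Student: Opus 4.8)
The plan is to reduce properties (1) and (2) to a Hodge-theoretic solvability problem for a primitive of $*du$, and then to use (4) to remove the residual gauge freedom and to force uniqueness.

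First I would record why $*du$ is a suitable right-hand side. Since $u$ is harmonic, $du$ is both closed, $d(du)=0$, and coclosed, $\delta(du)=\Delta u=0$, so $du$ is a harmonic field; applying $*$ (which commutes with $\Delta$ and intertwines $d$ with $\pm\delta$) shows that $*du$ is a closed and coclosed $(n-1)$-form. In particular, any $\omega$ satisfying $d\omega=*du$ and $\delta\omega=0$ is automatically harmonic, $\Delta\omega=\delta d\omega=\delta(*du)=0$. I would then translate the hypothesis: by Green's formula every exact form is $L^2(M)$-orthogonal to $\mathcal H_N^{n-1}$ (for $\gamma\in\mathcal H_N^{n-1}$ one has $\langle d\omega,\gamma\rangle=\langle\omega,\delta\gamma\rangle+\int_{\p M}\langle i^*\omega,i_\nu\gamma\rangle=0$, where $i^*$ denotes the tangential trace), so in the Hodge decomposition of the closed form $*du$ the assumption $*du\perp_{L^2(M)}\mathcal H_N^{n-1}$ says exactly that its harmonic part vanishes, i.e. $*du$ is exact. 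Equivalently, since $*\mathcal H_N^{n-1}=\mathcal H_D^1$, the assumption reads $du\perp_{L^2(M)}\mathcal H_D^1$.

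For existence I would invoke the Hodge--Morrey--Friedrichs decomposition. As $*du$ is a closed, coclosed form whose cohomology class vanishes, it is an exact harmonic field and hence admits a primitive $\omega$ with $d\omega=*du$ that may be chosen coexact, $\delta\omega=0$, and subject to the Neumann condition $i_\nu\omega=0$. Concretely this is cleanest in dual form: setting $\theta=*\omega\in A^2(M)$, the system (1)--(3) becomes $\delta\theta=\pm du$, $d\theta=0$, $i^*\theta=0$, i.e. one seeks a closed coexact primitive of the coclosed $1$-form $\pm du$ carrying Dirichlet data, which is solvable precisely under $du\perp\mathcal H_D^1$; transporting back by $*$ returns $\omega$. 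I expect the boundary conditions to be the main obstacle: the naive primitive supplied by the exact part $\{\,d\alpha:\alpha\in A^{n-2}(M),\ i^*\alpha=0\,\}$ of the Hodge--Morrey splitting carries the wrong (Dirichlet) boundary data, so one must instead realize $*du$ through the coexact/Neumann-potential part and then confirm $d\omega=*du$ and $\delta\omega=0$ by an integration-by-parts (energy) identity rather than reading them off the decomposition formally.

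It remains to normalize and to prove uniqueness. Since the boundary trace is injective on the finite-dimensional space $\mathcal H_N^{n-2}$ (a Neumann harmonic field with vanishing boundary restriction has zero Cauchy data, hence vanishes by unique continuation), the $L^2(\p M)$ inner product is nondegenerate on $\mathcal H_N^{n-2}$; subtracting from $\omega$ the unique $\zeta\in\mathcal H_N^{n-2}$ for which $\omega-\zeta\perp_{L^2(\p M)}\mathcal H_N^{n-2}$ enforces (4) while preserving (1)--(3), because elements of $\mathcal H_N^{n-2}$ are closed, coclosed and have $i_\nu=0$. For uniqueness, if $\omega_1,\omega_2$ both satisfy (1)--(4) then $\eta=\omega_1-\omega_2$ obeys $d\eta=0$, $\delta\eta=0$, $i_\nu\eta=0$, so $\eta\in\mathcal H_N^{n-2}$, while simultaneously $\eta\perp_{L^2(\p M)}\mathcal H_N^{n-2}$; pairing $\eta$ with itself over $\p M$ gives $\eta|_{\p M}=0$ and hence $\eta=0$ by the same injectivity, which is the fact underlying the positivity of the nonzero Steklov eigenvalues.
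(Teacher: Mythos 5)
The paper does not actually prove this lemma---it is recalled from \cite{YY2,SY}---so there is no in-paper argument to compare against; your proof follows the same standard route as the cited source (the form $*du$ is a closed and coclosed $(n-1)$-form, the hypothesis is exactly exactness via the Friedrichs decomposition $\mathcal H^{n-1}=\mathcal H^{n-1}_{ex}\oplus\mathcal H^{n-1}_N$, and uniqueness comes from unique continuation for harmonic fields) and is correct. The only step you leave impressionistic is existence: rather than speculating about which summand of the Hodge--Morrey splitting supplies the primitive, you should invoke directly the solvability theorem for the first-order Neumann boundary value problem $d\omega=\chi$, $\delta\omega=0$, $i_\nu\omega=0$ in Schwarz's book \cite{S}, whose compatibility conditions $d\chi=0$ and $\chi\perp_{L^2(M)}\mathcal H^{n-1}_N(M)$ are precisely what you verified for $\chi=*du$; with that reference in place, your normalization by the unique $\zeta\in\mathcal H^{n-2}_N(M)$ and the uniqueness argument via nondegeneracy of the boundary $L^2$ pairing on $\mathcal H^{n-2}_N(M)$ complete the proof exactly as in \cite{YY2}.
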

Next, recall some matrix inequalities that will be used in the next section. The inequalities is simple and may be well known for experts. However, since we can not find direct reference for them, proofs of them are also given.
\begin{lem}\label{lem-matrix}
Let $A$ be a $m\times m$ matrix that is positive definite and
\begin{equation}
\lambda_1(A)\leq \lambda_2(A)\leq\cdots\leq \lambda_m(A)
\end{equation}
be its eigenvalues. Then,
\begin{enumerate}
\item for any $0\leq p\leq 1$ and $a_1\geq a_2\geq\cdots\geq a_m\geq 0$,
\begin{equation}\label{eqn-A-0-1}
\sum_{i=1}^m(a_i\lambda_i(A))^p\leq \sum_{i=1}^m(a_iA(i,i))^p;
\end{equation}
\item for $p\geq 1$ or $p
\leq 0$, and $0< a_1\leq a_2\leq\cdots\leq a_m$,
\begin{equation}\label{eqn-A-1-1}
\sum_{i=1}^m(a_i\lambda_i(A))^p\geq \sum_{i=1}^m(a_iA(i,i))^p;
\end{equation}
\item for any $p\leq 0$ and $k=1,2,\cdots,m$
\begin{equation}\label{eqn-A-0-0}
\begin{split}
&\sum_{1\leq i_1<i_2<\cdots< i_k\leq m}\left(\lambda_{i_1}(A)\lambda_{i_2}(A)\cdots\lambda_{i_k}(A)\right)^p\\
\geq& \sum_{1\leq i_1<i_2<\cdots< i_k\leq m}\left(A(i_1,i_1)A(i_2,i_2)\cdots A(i_k,i_k)\right)^p.
\end{split}
\end{equation}
\end{enumerate}
Here $A(i,j)$ means the $(i,j)$-entry of $A$.
\end{lem}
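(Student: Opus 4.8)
The plan is to run all three parts off a single structural fact: the Schur--Horn relation between the diagonal and the spectrum of $A$. Writing $A=U\Lambda U^*$ with $\Lambda=\mathrm{diag}(\lambda_1(A),\dots,\lambda_m(A))$, one has $A(i,i)=\sum_{j}S_{ij}\lambda_j(A)$ where $S_{ij}=|U_{ij}|^2$ is doubly stochastic; equivalently the vector $(A(1,1),\dots,A(m,m))$ is majorized by $(\lambda_1(A),\dots,\lambda_m(A))$. This is the only place positive definiteness and the eigenvalue ordering enter, and I expect it to do essentially all of the work.

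For (1) and (2) I would argue with Jensen plus rearrangement. Since $0<A(i,i)=\sum_j S_{ij}\lambda_j(A)$ is a convex combination of the eigenvalues, Jensen applied to $t\mapsto t^p$ gives $(A(i,i))^p\ge\sum_j S_{ij}\lambda_j(A)^p$ when $0\le p\le1$ (concave case) and $(A(i,i))^p\le\sum_j S_{ij}\lambda_j(A)^p$ when $p\ge1$ or $p\le0$ (convex case). Multiplying by $a_i^p\ge0$ and summing reduces both claims to comparing $\sum_{i,j}S_{ij}\,a_i^p\lambda_j(A)^p$ with $\sum_i a_i^p\lambda_i(A)^p$. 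Because this expression is linear in $S$ over the doubly stochastic polytope, Birkhoff--von Neumann lets me replace $S$ by the extremal permutation, and then the rearrangement inequality finishes: in (1) the sequences $a_i^p$ (decreasing) and $\lambda_i(A)^p$ (increasing) are oppositely ordered, so the identity minimizes and $\sum_{i,j}S_{ij}a_i^p\lambda_j(A)^p\ge\sum_i a_i^p\lambda_i(A)^p$; in (2) the hypotheses on $(a_i)$ are arranged exactly so that $a_i^p$ and $\lambda_i(A)^p$ are similarly ordered (both increasing when $p\ge1$, both decreasing when $p\le0$), so the identity maximizes. Combining with the Jensen step yields \eqref{eqn-A-0-1} and \eqref{eqn-A-1-1}.

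For (3) the weights are absent and the right-hand object is symmetric, so I would instead invoke Schur's majorization criterion directly. Setting $\Psi(x)=e_k(x_1^p,\dots,x_m^p)$, where $e_k$ is the $k$-th elementary symmetric polynomial, it suffices to show $\Psi$ is Schur-convex on $(0,\infty)^m$ for $p\le0$, since then the Schur--Horn majorization immediately gives $\Psi(A(1,1),\dots,A(m,m))\le\Psi(\lambda_1(A),\dots,\lambda_m(A))$, which is precisely \eqref{eqn-A-0-0}. Schur-convexity is checked through the sign of $(x_i-x_j)(\partial_i\Psi-\partial_j\Psi)$: expanding $\partial_i\Psi$ in terms of the elementary symmetric polynomials in the remaining variables and splitting off the roles of $i$ and $j$, this quantity becomes $p\big[-P\,x_i^{p-1}x_j^{p-1}(x_i-x_j)^2+Q\,(x_i^{p-1}-x_j^{p-1})(x_i-x_j)\big]$ with $P,Q\ge0$; for $p\le0$ both bracketed terms are $\le0$ while the outer factor $p\le0$, so the product is $\ge0$.

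I expect the only genuinely delicate point to be this last sign computation in (3); the rest is bookkeeping once the Schur--Horn doubly stochastic representation is in hand. A minor thing to state carefully is that in (1) and (2) the monotonicity hypotheses imposed on $(a_i)$ are precisely those that keep the identity permutation extremal after the map $t\mapsto t^p$ is applied, which is exactly why the two parts require opposite orderings of the weights.
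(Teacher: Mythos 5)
Your proposal is correct. For parts (1) and (2) you are essentially running the paper's argument in unpacked form: the paper quotes Schur's theorem (diagonal majorized by spectrum), applies the abstract majorization principle to $f(t)=\pm t^p$, and then handles the weights by Abel summation plus rearrangement; you instead write out the underlying doubly stochastic representation $A(i,i)=\sum_j|U_{ij}|^2\lambda_j(A)$, apply Jensen, reduce to permutations via Birkhoff--von Neumann, and finish with rearrangement. These are two standard proofs of the same weighted majorization inequality, and your ordering analysis of $a_i^p$ versus $\lambda_i(A)^p$ in the two regimes matches the paper's Abel-summation step. For part (3), however, your route is genuinely different: the paper deduces it from part (2) applied to the exterior power $\wedge^kA$ (whose eigenvalues and diagonal entries are the $k$-fold eigenvalue products and principal $k\times k$ minors) followed by Hadamard's inequality on those minors, whereas you prove directly that $\Psi(x)=e_k(x_1^p,\dots,x_m^p)$ is Schur-convex on $(0,\infty)^m$ for $p\le 0$ via the Schur--Ostrowski criterion and then invoke the diagonal-versus-spectrum majorization once. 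Your sign computation
\begin{equation*}
(x_i-x_j)\left(\partial_i\Psi-\partial_j\Psi\right)=p\left[-e_{k-2}\,x_i^{p-1}x_j^{p-1}(x_i-x_j)^2+e_{k-1}\,(x_i^{p-1}-x_j^{p-1})(x_i-x_j)\right]
\end{equation*}
(with the elementary symmetric polynomials taken in the remaining variables $x_\ell^p$, $\ell\ne i,j$, hence nonnegative) is right: for $p\le0$ both bracketed terms are nonpositive because $t\mapsto t^{p-1}$ is decreasing, so the product is nonnegative. The paper's version of (3) is shorter because it recycles part (2), but it needs the exterior-power machinery and Hadamard's inequality; yours is self-contained symmetric-function calculus at the cost of the derivative computation, which you have carried out correctly. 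The only loose end in both your argument and the paper's is the degenerate normalization $0^0$ when $p=0$ and some $a_i=0$ in part (1), which is an artifact of the statement rather than of either proof.
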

\begin{proof}
\begin{enumerate}
\item By Schur's Theorem (see \cite{Bh}),
\begin{equation}
\{A(1,1),A(2,2),\cdots,A(m,m)\}\prec\{\lambda_1(A),\lambda_2(A),\cdots,\lambda_m(A)\}.
\end{equation}
 Note that, $f(t)=-t^p$ is convex for $0\leq p\leq 1$. By basic majorization principles,  we have
\begin{equation}\label{eqn-A-1}
\begin{split}
&\{-A(1,1)^p,-A(2,2)^p,\cdots,-A(m,m)^p\}\\
\prec_w&\{-\lambda_1(A)^p,-\lambda_2(A)^p,\cdots,-\lambda_m(A)^p\}.
\end{split}
\end{equation}
Let $\sigma:\{1,2,\cdots,m\}\to\{1,2,\cdots,m\}$ be a permutation such that
\begin{equation*}
A(\sigma(1),\sigma(1))\leq A(\sigma(2),\sigma(2))\leq \cdots \leq A(\sigma(m),\sigma(m)).
\end{equation*}
Then, by \eqref{eqn-A-1} and rearrangement inequality,
\begin{equation}\label{eqn-abel-1}
\begin{split}
&-a_1^p \lambda_1^p-a_2^p \lambda_2^p-\cdots-a_m^p\lambda_m^p\\
=& -(a_1^p-a_2^p)\lambda_1^p-(a_2^p-a_3^p)(\lambda_1^p+\lambda_2^p)-\cdots-(a_{m-1}^p-a_m^p)(\lambda_1^p+\lambda_2^p+\cdots+\lambda_{m-1}^p)\\
&-a_m^p(\lambda_1^p+\lambda_2^p+\cdots+\lambda_{m}^p)\\
\geq&-(a_1^p-a_2^p)A(\sigma(1),\sigma(1))^p-(a_2^p-a_3^p)(A(\sigma(1),\sigma(1))^p+A(\sigma(2),\sigma(2))^p)-\cdots\\
&-(a_{m-1}^p-a_m^p)(A(\sigma(1),\sigma(1))^p+A(\sigma(2),\sigma(2))^p+\cdots+A(\sigma(m-1),\sigma(m-1))^p)\\
&-a_m^p(A(\sigma(1),\sigma(1))^p+A(\sigma(2),\sigma(2))^p+\cdots+A(\sigma(m),\sigma(m))^p)\\
=&-a_1^pA(\sigma(1),\sigma(1))^p-a_2^pA(\sigma(2),\sigma(2))^p-\cdots-a_m^pA(\sigma(m),\sigma(m))^p\\
\geq&-a_1^pA(1,1)^p-a_2^pA(2,2)^p-\cdots-a_m^pA(m,m)^p.\\
\end{split}
\end{equation}
This completes the proof of (1).
\item Note that $f(t)=t^p$ is convex for $p\geq 1$ or $p\leq 0$. So, by basic majorization principles ,
\begin{equation*}
\begin{split}
&\{A(1,1)^p,A(2,2)^p,\cdots,A(m,m)^p\}\\
\prec_w&\{\lambda_1(A)^p,\lambda_2(A)^p,\cdots,\lambda_m(A)^p\}.
\end{split}
\end{equation*}
Then, a similar argument as in \eqref{eqn-abel-1} will give us (2).
\item
 Applying (2) to the exterior power $\wedge^kA$ of $A$ (see \cite{Bh}) with all $a_i$'s being 1 , we have
\begin{equation}
\sum_{1\leq i_1<i_2<\cdots< i_r\leq m}\left(\lambda_{i_1}(A)\lambda_{i_2}(A)\cdots\lambda_{i_r}(A)\right)^p\geq \sum_{1\leq i_1<i_2<\cdots< i_r\leq m}\left(A\left[\begin{array}{l}i_1,i_2,\cdots,i_r\\
i_1,i_2,\cdots,i_r
\end{array}\right]\right)^p.
\end{equation}
Here
\begin{equation}
A\left[\begin{array}{l}i_1,i_2,\cdots,i_r\\
i_1,i_2,\cdots,i_r
\end{array}\right]=\left|\begin{array}{llll}A(i_1,i_1)&A(i_1,i_2)&\cdots&A(i_1,i_r)\\
A(i_2,i_1)&A(i_2,i_2)&\cdots&A(i_2,i_r)\\
\vdots&\vdots&\cdots&\vdots\\
A(i_r,i_1)&A(i_r,i_2)&\cdots&A(i_r,i_r)\\
\end{array}\right|.
\end{equation}
By Hadamard's inequality,
\begin{equation}
 A\left[\begin{array}{l}i_1,i_2,\cdots,i_r\\
i_1,i_2,\cdots,i_r
\end{array}\right]\leq A(i_1,i_1)A(i_2,i_2)\cdots A(i_r,i_r).
\end{equation}
So, we have (3).
\end{enumerate}
\end{proof}
Finally, recall the following elementary inequality that will be used in the next section. For completeness, we also give a proof.
\begin{lem}\label{lem-young}
Let $x_1,x_2,\cdots,x_m$ and $p_1,p_2,\cdots,p_m$ be positive numbers. Then
\begin{equation}
x_1^{p_1}+x_2^{p_2}+\cdots+x_m^{p_m}\geq \frac{1}{p} \left(p_1^\frac{1}{p_1}p_2^\frac{1}{p_2}\cdots p_m^\frac{1}{p_m}\right)^p( x_1x_2\cdots x_m)^p.
\end{equation}
Here
\begin{equation}
\frac{1}{p}=\frac{1}{p_1}+\frac{1}{p_2}+\cdots+\frac{1}{p_m}.
\end{equation}
\begin{proof}
Let $q_i=\frac{p_i}{p}$ for $i=1,2,\cdots,m$. Then
\begin{equation}
\frac{1}{q_1}+\frac{1}{q_2}+\cdots+\frac{1}{q_m}=1.
\end{equation}
By Young's inequality,
\begin{equation}
\begin{split}
&x_1^{p_1}+x_2^{p_2}+\cdots+x_m^{p_m}\\
=&\frac{1}{q_1}\left(q_1^\frac{1}{q_1}x_1^p\right)^{q_1}+\frac{1}{q_2}\left(q_2^\frac{1}{q_2}x_2^p\right)^{q_2}+\cdots\frac{1}{q_m}\left(q_m^\frac{1}{q_m}x_m^p\right)^{q_m}\\
\geq& q_1^\frac{1}{q_1}q_2^\frac{1}{q_2}\cdots q_m^\frac{1}{q_m}(x_1x_2\cdots x_m)^p\\
=&\frac{1}{p} \left(p_1^\frac{1}{p_1}p_2^\frac{1}{p_2}\cdots p_m^\frac{1}{p_m}\right)^p( x_1x_2\cdots x_m)^p.
\end{split}
\end{equation}
This completes the proof of the inequality.
\end{proof}
\end{lem}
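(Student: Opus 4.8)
The plan is to recognize the asserted bound as a repackaging of the multivariate Young (weighted arithmetic--geometric mean) inequality, with the exponents rescaled so that the relevant weights become conjugate. First I would introduce the normalized exponents $q_i=p_i/p$ for $i=1,2,\cdots,m$. Summing reciprocals gives $\sum_{i=1}^m 1/q_i=p\sum_{i=1}^m 1/p_i=p\cdot\frac1p=1$, so the $q_i$ form a system of conjugate exponents of the kind required by Young's inequality. This is the only place where the hypothesis $\frac1p=\sum_i\frac1{p_i}$ is used, and it is precisely what makes the rescaling work.

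Next I would rewrite each summand $x_i^{p_i}$ in a form adapted to these conjugate exponents. Since $p_i=pq_i$, we have $x_i^{p_i}=(x_i^p)^{q_i}$, and I would absorb the weight by setting $y_i=q_i^{1/q_i}x_i^p$, so that $\frac{1}{q_i}y_i^{q_i}=x_i^{p_i}$ exactly. The left-hand side of the claimed inequality then equals $\sum_{i=1}^m\frac{1}{q_i}y_i^{q_i}$, and I would apply the multivariate Young inequality $\sum_{i=1}^m\frac{1}{q_i}y_i^{q_i}\geq\prod_{i=1}^m y_i$, valid whenever $\sum_i 1/q_i=1$ and the $y_i$ are positive (this itself follows from concavity of the logarithm, i.e. Jensen's inequality). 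Substituting back, $\prod_{i=1}^m y_i=\left(\prod_{i=1}^m q_i^{1/q_i}\right)(x_1x_2\cdots x_m)^p$.

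The final step, and the only one requiring any care, is simplifying the constant $\prod_{i=1}^m q_i^{1/q_i}$. Writing $q_i^{1/q_i}=(p_i/p)^{p/p_i}$ and separating numerator from denominator, the denominators collapse as $\prod_i p^{p/p_i}=p^{p\sum_i 1/p_i}=p^{p\cdot(1/p)}=p$, while the numerators give $\left(\prod_i p_i^{1/p_i}\right)^p$. Hence $\prod_i q_i^{1/q_i}=\frac1p\left(p_1^{1/p_1}p_2^{1/p_2}\cdots p_m^{1/p_m}\right)^p$, which is exactly the constant in the statement, completing the proof. I do not anticipate any genuine obstacle: the whole content is the change of variables $q_i=p_i/p$ that converts the given exponents into conjugate ones, after which the argument is a direct application of Young's inequality together with routine bookkeeping of exponents.
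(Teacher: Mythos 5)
Your proposal is correct and follows essentially the same route as the paper: the substitution $q_i=p_i/p$ to obtain conjugate exponents, the rewriting $x_i^{p_i}=\frac{1}{q_i}\left(q_i^{1/q_i}x_i^p\right)^{q_i}$, an application of the multivariate Young inequality, and the simplification $\prod_i q_i^{1/q_i}=\frac1p\left(\prod_i p_i^{1/p_i}\right)^p$. The only difference is that you spell out the constant computation explicitly, which the paper leaves implicit.
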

\section{Proof of the main theorems}
In this section, we prove Theorem \ref{thm-1} and Theorem \ref{thm-2}. First, by using Lemma \ref{lem-comp} and Lemma \ref{lem-harm-conj}, we have the following comparison of eigenvalues.
\begin{lem}\label{lem-comp-e}
Let $(M^n,g)$ be a compact oriented Riemannian manifold with nonempty boundary. Then, for any positive integers $r,s$ and $m$, there are two $m\times m$ matrices $A$ and $B$ that are both positive definite such that
\begin{enumerate}
\item $\sigma_{r+i}^{(0)}\leq \lambda_i(A)$;
\item $\sigma_{b_{n-2}+s+i-1}^{(n-2)}\leq \lambda_i(B)$ and
\item $B(i,i)\leq A^{-1}(i,i)\lambda_{b_{n-1}+r+s+i-1}$,
\end{enumerate}
for $i=1,2,\cdots,m$. Here $A^{-1}(i,j)$ and $B(i,j)$ mean the $(i,j)$-entry of $A^{-1}$ and $B$ respectively, $\lambda_k$ means the $k$-th eigenvalue for the Laplacian operator on $\p M$, and $b_k$ means the $k$-th Betti number of $M$.
\end{lem}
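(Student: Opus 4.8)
The plan is to realize the three quantities through a single family of boundary functions linked by the harmonic extension and the harmonic conjugate of Lemma \ref{lem-harm-conj}, and to read off (1) and (2) from two applications of Lemma \ref{lem-comp}. Write $L^{(0)}$ for the Dirichlet-to-Neumann map and $\Delta_{\p M}$ for the Laplacian on $\p M$. For a boundary function $f$ let $\hat f$ be its harmonic extension and, when $*d\hat f\perp_{L^2}\mathcal H_N^{n-1}(M)$, let $\omega_f$ be the harmonic conjugate $(n-2)$-form given by Lemma \ref{lem-harm-conj}, so that $d\omega_f=*d\hat f$, $\delta\omega_f=0$, $i_\nu\omega_f=0$. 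Since $*$ is a pointwise isometry and $\delta\omega_f=0$, one has $\int_M(|d\omega_f|^2+|\delta\omega_f|^2)=\int_M|d\hat f|^2=\int_{\p M}\langle L^{(0)}f,f\rangle$, while on the boundary $i_\nu d\omega_f=i_\nu(*d\hat f)=\pm *_{\p M}d_{\p M}f$, whence $\int_{\p M}|i_\nu d\omega_f|^2=\int_{\p M}|\nabla_{\p M}f|^2=\int_{\p M}\langle\Delta_{\p M}f,f\rangle$. The heart of the matter is the resulting product identity for the two Steklov Rayleigh quotients,
\[
\frac{\int_M|d\hat f|^2}{\int_{\p M}f^2}\cdot\frac{\int_{\p M}|i_\nu d\omega_f|^2}{\int_M|d\omega_f|^2+|\delta\omega_f|^2}=\frac{\int_{\p M}\langle\Delta_{\p M}f,f\rangle}{\int_{\p M}f^2},
\]
i.e. the product of the $0$-form and the conjugate $(n-2)$-form Steklov Rayleigh quotients equals the Laplace Rayleigh quotient on $\p M$. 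This is exactly the mechanism converting a product of two Steklov eigenvalues into one Laplace eigenvalue, and the matrices $A,B$ are the device carrying it across all $m$ indices simultaneously.

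Concretely, I would construct an $m$-dimensional space $V_0=\mathrm{span}\{\hat f_1,\dots,\hat f_m\}$ of harmonic functions together with its conjugate space $W=\mathrm{span}\{\omega_{f_1},\dots,\omega_{f_m}\}$ of $(n-2)$-forms, normalised so that $\{\hat f_i\}$ is orthonormal for the common base inner product $\int_M\langle d\alpha,d\beta\rangle$, which by the isometry above agrees with $\int_M(\langle d\omega_{f_i},d\omega_{f_j}\rangle+\langle\delta\omega_{f_i},\delta\omega_{f_j}\rangle)$. Applying Lemma \ref{lem-comp} with $p=0$ to $V_0$, arranged orthogonal to $\mathcal H_N^0(M)$ and to the first $r-1$ positive Steklov $0$-eigenfunctions, produces $A$ with $\sigma^{(0)}_{b_0+r+i-1}\le\lambda_i(A)$; since $b_0\ge1$, monotonicity of the eigenvalues gives (1). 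Applying Lemma \ref{lem-comp} with $p=n-2$ to $W$, which by Lemma \ref{lem-harm-conj} already satisfies $\delta\omega_{f_i}=i_\nu\omega_{f_i}=0$, $\Delta\omega_{f_i}=0$ (because $\delta*d\hat f_i=\pm*dd\hat f_i=0$) and $\omega_{f_i}\perp\mathcal H_N^{n-2}(M)$, and arranged orthogonal to the first $s-1$ positive Steklov $(n-2)$-eigenforms, produces $B$ with (2). The two matrices share the basis $f_i\leftrightarrow\hat f_i\leftrightarrow\omega_{f_i}$, so their entries are directly comparable.

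For (3) I would exploit the freedom in choosing the $f_i$. Choosing $V_0$ adapted to the Dirichlet-to-Neumann operator so that $A^{-1}(i,i)$ recovers the reciprocal of the \emph{standard} Steklov quotient $\int_{\p M}f_i^2/\int_M|d\hat f_i|^2$ (rather than the modified quotient that appears directly as $\lambda_i(A)$), while $B(i,i)$ equals the conjugate quotient $\int_{\p M}\langle\Delta_{\p M}f_i,f_i\rangle/\int_M|d\hat f_i|^2$, turns (3) into precisely $\int_{\p M}\langle\Delta_{\p M}f_i,f_i\rangle/\int_{\p M}f_i^2\le\lambda_{b_{n-1}+r+s+i-1}$ by the product identity above. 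This last inequality is then forced by placing each $f_i$ in the span of the first $b_{n-1}+r+s+i-1$ eigenfunctions of $\Delta_{\p M}$: a dimension count shows that the $b_{n-1}$ constraints from $*d\hat f\perp\mathcal H_N^{n-1}$, the $r-1$ and $s-1$ Steklov-orthogonality constraints, and the $i-1$ orthogonality relations to $f_1,\dots,f_{i-1}$ still leave an admissible $f_i$ in that span, on which the Laplace Rayleigh quotient is at most $\lambda_{b_{n-1}+r+s+i-1}$.

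The main obstacle is (3), and specifically the normalization making $A^{-1}(i,i)$ express the standard Steklov quotient: because $L^{(0)}$ and $\Delta_{\p M}$ do not commute, the naive pointwise bound on the product of the \emph{modified} Rayleigh quotients directly furnished by Lemma \ref{lem-comp} fails, and one must pass to the inverse diagonal $A^{-1}(i,i)$, for which the cancellation of $\int_M|d\hat f|^2$ in the product identity becomes available. Matching the constraint count to the index shift $b_{n-1}+r+s-1$ exactly, while simultaneously meeting the Steklov-orthogonality conditions needed for (1) and (2) and keeping each $f_i$ in low Laplace modes, is the delicate bookkeeping the argument must carry out; this is where the proof follows and extends the scheme of \cite{YY2}.
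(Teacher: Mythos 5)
Your proposal follows the paper's argument essentially step for step: the same dimension-counting construction of harmonic functions lying in $\mathrm{span}\{\hat\phi_2,\dots,\hat\phi_{b_{n-1}+r+s+i-1}\}$ subject to the $b_{n-1}+(r-1)+(s-1)+(i-1)$ orthogonality constraints, the same passage to harmonic conjugates via Lemma \ref{lem-harm-conj}, the same energy identity $\int_M|d\hat f|^2=\int_M\left(|d\omega_f|^2+|\delta\omega_f|^2\right)$, and the same cancellation turning $B(i,i)/A^{-1}(i,i)$ into the boundary Laplace Rayleigh quotient of $f_i$. The one point you must tighten is the definition of $A$: Lemma \ref{lem-comp} with $p=0$ produces the matrix with entries $\int_{\p M}(\p_\nu u_i)(\p_\nu u_j)\,dV_{\p M}$, whose inverse diagonal is \emph{not} $\int_{\p M}u_i^2\,dV_{\p M}$, and no ``adapted choice of $V_0$'' changes that; so the matrix you use for conclusion (1) and the matrix whose inverse diagonal you need for conclusion (3) are, as written, two different matrices, whereas the lemma requires a single $A$ satisfying both. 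The fix, which is what the paper does, is to define $A$ on the Dirichlet-energy-orthonormal basis by the standard Steklov bilinear form $\int_M\vv<du,dv>dV_M=\int_{\p M}\vv<Au,v>dV_{\p M}$: then $\sigma^{(0)}_{r+i}\leq\lambda_i(A)$ follows from the ordinary Courant--Fischer min-max principle rather than from Lemma \ref{lem-comp}, and $A^{-1}(i,j)=\int_{\p M}u_iu_j\,dV_{\p M}$ holds exactly, so your product identity delivers (3). With that single substitution your argument is the paper's proof.
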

\begin{proof}
Let
$$\phi_1=\frac{1}{\sqrt{A(\p M)}}, \phi_2,\cdots,\phi_k,\cdots$$
be a complete orthonormal system for eigenvalues of the Laplacian operator of $\p M$ according to eigenvalues
$$0=\lambda_1\leq \lambda_2\leq\cdots\leq\lambda_k\leq\cdots.$$
Here $A(\p M)$ means the area of $\p M$. Moreover, let
$$\psi_1,\psi_2,\cdots,\psi_k,\cdots$$
and
$$\epsilon_1,\epsilon_2,\cdots,\epsilon_k,\cdots$$
be complete orhtonormal systems for positive Steklov eigenvalues of functions and $(n-2)$-forms repectively, according to eigenvalues listed in ascending order.

By the same argument as in the proof of Theorem 1.1 in \cite{SY}, there are
nonconstant harmonic functions $u_1,u_2,\cdots,u_m$ such that
\begin{enumerate}
\item $*du_i\perp_{L^2(M)}\mathcal H^{n-1}_N(M)$;
\item $u_i\perp_{L^2(\p M)}\psi_1,\psi_2,\cdots,\psi_{r-1}$;
\item $\omega_i\perp_{L^2(\p M)}\epsilon_1,\epsilon_2,\cdots,\epsilon_{s-1}$ where $\omega_i$ is the harmonic conjugate of $u_i$ as in Lemma \ref{lem-harm-conj};
\item $u_i\in \mbox{span}\{\hat \phi_2,\hat\phi_3,\cdots, \hat \phi_{b_{n-1}+r+s+i-1}\}$ where $\hat\phi_i$ means the harmonic extension of $\phi_i$;
\item $\int_M\vv<du_i,du_j>dV_M=\delta_{ij}$
\end{enumerate}
for $i,j=1,2,\cdots,m$. For making the argument more self-contained, we sketch the construction of $u_1,u_2,\cdots, u_m$ in the following. Suppose that $u_1,u_2,\cdots, u_{k-1}$ satisfying (1),(2),(3),(4) and
\begin{equation}
\int_M\vv<du_i,du_j>dV_M=\delta_{ij}\ \mbox{for}\ i,j=1,2,\cdots,k-1,
\end{equation}
has been constructed. Suppose that
\begin{equation}
u_k=c_2\hat\phi_2+c_3\hat\phi_3+\cdots+c_{b_{n-1}+r+s+k-1}\hat\phi_{b_{n-1}+r+s+k-1}
\end{equation}
with $c_i$'s constants to be determined. Note that (1),(2),(3) and
\begin{equation}\label{eqn-uk}
\int_M\vv<du_k,du_i>dV_M=0\ \mbox{for}\ i=1,2,\cdots,k-1
\end{equation}
make
$$b_{n-1}+(r-1)+s-1+k-1=b_{n-1}+r+s+k-3$$
homogeneous linear restrictions on the $b_{n-1}+r+s+k-2$ unknowns $c_2,c_3,\cdots,c_{b_{n-1}+r+s+k-1}$. Because the number of unknowns is greater than the number of homogeneous linear restrictions, there is a nonconstant $u_k$ satisfying (1),(2),(3),(4) and \eqref{eqn-uk}. By re-scale $u_k$, we can suppose that
\begin{equation}
\int_M\vv<du_k,du_k>dV_M=1.
\end{equation}
This give us the construction of $u_1,u_2,\cdots,u_m$.

Note that
\begin{equation}
\int_{M}\vv<d\omega_i,d\omega_j>dV_M=\int_{M}\vv<du_i,du_j>dV_M=\delta_{ij}
\end{equation}
for $i,j=1,2,\cdots,m$.

Let $V=\mbox{span}\{u_1,u_2,\cdots,u_m\}$ and $W=\mbox{span}\{\omega_1,\omega_2,\cdots,\omega_m\}$. Let $A:V\to V$ and $B:W\to W$
be linear transformation on $V$ and $W$ such that
\begin{equation}\label{eqn-def-A}
\int_M\vv<du,dv>dV_M=\int_{\p M}\vv<Au,v>dV_{\p M}
\end{equation}
for any $u,v\in V$ and
\begin{equation}\label{eqn-def-B}
\int_M\vv<d B\alpha,d\beta>=\int_{\p M}\vv<i_\nu\alpha,i_\nu\beta>dV_{\p M}
\end{equation}
for any $\alpha,\beta\in W$ respectively. Then, by Courant-Fischer's min-max principle,
\begin{equation}\label{eqn-s-A}
\sigma_{r+i}^{(0)}\leq \lambda_i(A),
\end{equation}
and by Lemma \ref{lem-comp},
\begin{equation}\label{eqn-s-B}
\sigma_{b_{n-2}+s+i-1}^{(n-2)}\leq \lambda_i(B)
\end{equation}
for $i=1,2,\cdots,m$.

Denote the matrix of $A$ and $B$ under the basis $\{u_1,u_2,\cdots,u_m\}$ and $\{\omega_1,\omega_2,\cdots,\omega_m\}$ as $A$ and $B$ respectively. Then, by \eqref{eqn-def-A} and \eqref{eqn-def-B},
\begin{equation}
A^{-1}(i,j)=\int_{\p M}\vv<u_i,u_j>dV_{\p M}\ \mbox{and}\ B(i,j)=\int_{\p M}\vv<i_\nu\omega_i,i_\nu\omega_j>dV_{\p M},
\end{equation}
for $i,j=1,2,\cdots,m$. Moreover,
\begin{equation}\label{eqn-AB}
\begin{split}
\frac{B(i,i)}{A^{-1}(i,i)}=&\frac{\int_{\p M}\vv<i_\nu\omega_{i},i_\nu\omega_{i}>dV_{\p M}}{\int_{\p M}\vv<u_{i},u_{i}>dV_{\p M}}\\
=&\frac{\int_{\p M}\vv<i_\nu*du_i,i_\nu*du_{i}>dV_{\p M}}{\int_{\p M}\vv<u_{i},u_{i}>dV_{\p M}}\\
=&\frac{\int_{\p M}\vv<du_i,du_{i}>dV_{\p M}}{\int_{\p M}\vv<u_{i},u_{i}>dV_{\p M}}\\
\leq& \lambda_{b_{n-1}+r+s+i-1}.
\end{split}
\end{equation}
This completes the proof of Lemma \ref{lem-comp-e}.
\end{proof}
\begin{rem}
\eqref{eqn-s-A} can also be shown by similar arguments as in the proof of \eqref{lem-comp}.
\end{rem}
Now, we are ready to prove Theorem \ref{thm-1} and Theorem \ref{thm-2}.
\begin{proof}[Proof of Theorem \ref{thm-1}]Let $A,B$ be the matrices in Lemma \ref{lem-comp-e}. Then, by Lemma \ref{lem-comp-e}, \eqref{eqn-A-0-1} and \eqref{eqn-A-1-1}, we have
\begin{equation}
\begin{split}
\sum_{i=1}^m\left(a_i\sigma_{b_{n-2}+s+i-1}^{(n-2)}\right)^\frac{1}{p}\leq& \sum_{i=1}^m\left(a_i\lambda_i(B)\right)^\frac{1}{p}\\
\leq& \sum_{i=1}^m\left(a_iB(i,i)\right)^\frac{1}{p}\\
\leq&\sum_{i=1}^mA^{-1}(i,i)^\frac{1}{p}\left(a_i\lambda_{b_{n-1}+r+s+i-1}\right)^\frac{1}{p}\\
=&\sum_{i=1}^m\left(c_iA^{-1}(i,i)\right)^\frac{1}{p}\left(a_ic_i^{-1}\lambda_{b_{n-1}+r+s+i-1}\right)^\frac{1}{p}\\
\leq& \left(\sum_{i=1}^{m}\left(c_iA^{-1}(i,i)\right)^{\frac{q}{p}}\right)^\frac{1}{q}\left(\sum_{i=1}^m\left(\frac{a_i\lambda_{b_{n-1}+r+s+i-1}}{c_i}\right)^\frac{q^*}{p}\right)^{\frac{1}{q^*}}\\
\leq&\left(\sum_{i=1}^{m}\left(\frac{c_i}{\lambda_i(A)}\right)^{\frac{q}{p}}\right)^\frac{1}{q}\left(\sum_{i=1}^m\left(\frac{a_i\lambda_{b_{n-1}+r+s+i-1}}{c_i}\right)^\frac{q^*}{p}\right)^{\frac{1}{q^*}}\\
\leq&\left(\sum_{i=1}^{m}\left(\frac{c_i}{\sigma_{r+i}^{(0)}}\right)^{\frac{q}{p}}\right)^\frac{1}{q}\left(\sum_{i=1}^m\left(\frac{a_i\lambda_{b_{n-1}+r+s+i-1}}{c_i}\right)^\frac{q^*}{p}\right)^{\frac{1}{q^*}}.
\end{split}
\end{equation}
This completes the proof of Theorem \ref{thm-1}.
\end{proof}
Similarly as in the proof of Theorem \ref{thm-1}, by using Lemma \ref{lem-comp-e} and Lemma \ref{lem-matrix}, we can prove Theorem \ref{thm-2}.
\begin{proof}[Proof of Theorem \ref{thm-2}]
Let $A$ and $B$ be the matrices in Lemma \ref{lem-comp-e}. Then, by Lemma \ref{lem-comp-e}, \eqref{eqn-A-0-0}, \eqref{eqn-A-1-1} and Lemma \ref{lem-young},
\begin{equation}
\begin{split}
&\sum_{1\leq i_1<i_2<\cdots<i_k\leq m}\left(\frac{1}{\sigma_{b_{n-2}+s+i_1-1}^{(n-2)}\sigma_{b_{n-2}+s+i_2-1}^{(n-2)}\cdots\sigma_{b_{n-2}+s+i_k-1}^{(n-2)}}\right)^p+\mu C_{m-1}^{k-1}\sum_{i=1}^{m}\left(\frac{1}{\sigma_{r+i}^{(0)}}\right)^q\\
\geq&\sum_{1\leq i_1<i_2<\cdots<i_k\leq m}\left(\frac{1}{\lambda_{i_1}(B)\lambda_{i_2}(B)\cdots\lambda_{i_k}(B)}\right)^p+\mu C_{m-1}^{k-1}\sum_{i=1}^{m}\left(\frac{1}{\lambda_i(A)}\right)^q\\
\geq&\sum_{1\leq i_1<i_2<\cdots<i_k\leq m}\left(\frac{1}{B(i_1,i_1)B(i_2,i_2)\cdots B(i_k,i_k)}\right)^p+\mu C_{m-1}^{k-1}\sum_{i=1}^{m}\left(A^{-1}(i,i)\right)^q\\
=&\sum_{1\leq i_1<i_2<\cdots<i_k\leq m}\bigg[\left(\frac{1}{B(i_1,i_1)B(i_2,i_2)\cdots B(i_k,i_k)}\right)^p\\
&+\mu\left(A^{-1}(i_1,i_1)\right)^q+\mu\left(A^{-1}(i_2,i_2)\right)^q+\cdots+\mu\left(A^{-1}(i_k,i_k)\right)^q\bigg]\\
\geq&\frac{kp+q}{pq}p^\frac{q}{kp+q}q^\frac{kp}{kp+q}\mu^\frac{kp}{k+q}\sum_{1\leq i_1<i_2<\cdots<i_k\leq m}\left(\frac{A^{-1}(i_1,i_1)A^{-1}(i_2,i_2)\cdots A^{-1}(i_k,i_k)}{B(i_1,i_1)B(i_2,i_2)\cdots B(i_k,i_k)}\right)^\frac{pq}{kp+q}\\
\geq&\frac{kp+q}{pq}p^\frac{q}{kp+q}q^\frac{kp}{kp+q}\mu^\frac{kp}{kp+q}\times\\
&\sum_{1\leq i_1<i_2<\cdots<i_k\leq m}\left(\frac{1}{\lambda_{b_{n-1}+r+s+i_1-1}\lambda_{b_{n-1}+r+s+i_2-1}\cdots\lambda_{b_{n-1}+r+s+i_k-1}}\right)^\frac{pq}{kp+q}.\\
\end{split}
\end{equation}
This completes the proof of Theorem \ref{thm-2}.
\end{proof}

\end{document}